\newtheoremstyle{theoremdd}
{\topsep}
{\topsep}
{\itshape}
{0pt}
{\bfseries}
{:}
{ }
{\thmname{#1}\thmnumber{ #2}\thmnote{ (#3)}}
\theoremstyle{theoremdd}
\newtheorem{theorem}{Theorem}[section]
\newtheorem{remark}[theorem]{Remark}
\pgfplotsset{
	standard/.style={
		axis x line=middle,
		axis y line=middle,
		enlarge x limits=0.15,
		enlarge y limits=0.15,
		every axis x label/.style={at={(current axis.right of origin)},anchor=north west},
		every axis y label/.style={at={(current axis.above origin)},anchor=north east},
		every axis plot post/.style={mark options={fill=white}}
	}
}
\DeclareRobustCommand{\rvdots}{%
	\vbox{
		\baselineskip4\p@\lineskiplimit\z@
		\kern-\p@
		\hbox{.}\hbox{.}\hbox{.}
}}
\def\footnoterule{\relax%
	\kern-5pt
	\hbox to \columnwidth{\hfill\vrule width .9\columnwidth height 0.4pt\hfill}
	\kern4.6pt}
\definecolor{darkblue}{rgb}{0.0,0.0,0.5}
\title{On Robustness of Double Linear \\  Trading with Transaction Costs}
\author{\large Chung-Han Hsieh,$^{*}$ \textit{Member, IEEE} 
	\thanks{This paper is partially supported by the Ministry of Science and Technology~(MOST), Taiwan, R.O.C. under Grant:  MOST 111--2221--E--007--124-- \\${}^*$Chung-Han Hsieh is with the Department of Quantitative Finance, National Tsing Hua University, Hsinchu, Taiwan 30013, R.O.C. E-mail: \href{mailto: ch.hsieh@mx.nthu.edu.tw}{ch.hsieh@mx.nthu.edu.tw}. } 
}
\begin{document}

	\maketitle
	\thispagestyle{empty}
	\pagestyle{empty}
	
	
	\begin{abstract} 
	A trading system is said to be \textit{robust} if it generates a robust return regardless of market direction. 	To this end, a consistently positive expected  trading gain is often used as a robustness metric for a trading system.
	In this paper, we propose a new class of trading policies called the \textit{double linear policy} in an asset trading scenario when the transaction costs are involved. 
	Unlike many existing papers, we first show that  the desired robust positive expected gain may disappear when transaction costs are involved. 
	Then we quantify under what conditions  the desired  positivity can still be preserved. 
	In addition, we conduct heavy Monte-Carlo simulations for an underlying asset whose prices are governed by a geometric Brownian motion with jumps to validate our theory. 
	A more realistic backtesting example involving historical data for cryptocurrency Bitcoin-USD  is also~studied.
	\end{abstract}
	
	\begin{IEEEkeywords}
	Finance, Stochastic Systems, Robustness,  Algorithmic Trading, 	
	\end{IEEEkeywords}
	
	
	%
	\section{Introduction} \label{SECTION: INTRODUCTION}
	A trading system is said to be \textit{robust} if it generates a robust return regardless of market direction. In particular, a consistently positive expected  trading gain-loss is often used as a robustness metric.
	It is known that the  so-called  \textit{Simultaneous Long-Short (SLS)} control scheme, see~\cite{barmish2008trading,
		barmish2011performance, 	 barmish2011arbitrage,
		barmish2015new, 
		baumann2016stock, baumann2017simultaneously}, 
	guarantees the satisfaction of the so-called \textit{Robust Positive Expectation} (RPE); i.e., the cumulative trading gain-loss function is guaranteed to have positive expected value for a broad class of stock price processes. 
	This fact attracts many new extensions and modifications to the SLS theory.

	For example, the SLS theory involves RPE results with respect to stock prices having time-varying drift and volatility; see~\cite{primbs2017robustness}, prices generated from Merton’s diffusion model~\cite{baumann2017simultaneously}, generalization from a static linear policy to the case of Proportional–Integral (PI) controllers~\cite{malekpour2018generalization}, and discrete-time systems with delays~\cite{malekpour2016stock}. 
	More recently, in~\cite{deshpande2020simultaneous}, an SLS control with cross-coupling is proposed to trade two stocks.
	In~\cite{o2020generalized}, a generalized RPE Theorem to the case of an SLS controller which can have different parameters for the long and short sides of the trade is studied. 
	In~\cite{maroni2019robust},  an~$\mathcal{H}_\infty$  approach for selecting the SLS controller parameters is proposed.
	
	To close this brief introduction, we mention some related studies using the control-theoretic approach in stock trading scenario; e.g., see \cite{zhang2001stock} for studying optimal selling rule, \cite{tie2018optimal, deshpande2018generalization} for studying optimal pair trading, and~\cite{barmish2021feedforward} for studying a new feedforward control in transaction level price model.
	
	While various extensions and ramifications are proposed to improve the SLS theory, the effect of transaction costs, which is believed to erode the desired positivity, is arguably lacking in the  existing study. 
	To this end, we propose a new SLS-based trading structure, which we call the \textit{double linear policy}, in a discrete-time setting to study  positive expectation property closely when the transaction costs are involved.
	
	The contributions of this paper are threefold: First,   we propose a new double linear policy scheme  for a class of asset price processes involving independent returns; see Section~\ref{SECTION: Problem Formulation}.
	In the financial market with nonzero transaction costs, we derive explicit expressions for the expected value and variance of the cumulative gain-loss function using the proposed policy scheme; see Section~\ref{SECTION: Analysis of Trading Performance}.
	Second, we go beyond the existing literature by quantifying under what condition  the desired expected positivity can still be preserved when transaction costs are involved.
	A novel asymptote approach is used to obtain a relatively conservative positivity; see Theorem~\ref{theorem: Robust Positive Expectation} in Section \ref{SECTION: Analysis of Trading Performance}. 
	Third, we   validate the theory via heavy Monte-Carlo simulations.  An empirical backtesting example using the historical data for cryptocurrency Bitcoin USD is also provided; see Section~\ref{SECTION: Illustrative Examples}.


	\section{Preliminaries}	\label{SECTION: Problem Formulation}
	In this section, we provide some necessary preliminaries that are useful for the analysis to follow. 

	\subsection{Asset Prices and Returns}
	For $k=0, 1, 2, \dots$, let~$S(k) > 0$ be the \textit{price} of an underlying risky asset, often to be stock, at stage~$k$. Then the associated \textit{per-period return}, call it $X(k)$, is given by 
	$
		X(k) := \frac{S(k+1) - S(k)}{S(k)}. 
$
	Assuming that~$X_{\min} \leq X(k)\leq X_{\max}$ for all $k$ almost surely with known bounds 
	$
	-1 < X_{\min} < 0 < X_{\max} < \infty
	$
	and~$X_{\min}$ and~$X_{\max}$ are in the support of $X(k)$.
	Unlike many existing papers in finance that models the asset price dynamics, we assume that~$X(k)$ are independent in $k$ and have \textit{arbitrary} distribution with common but unknown mean $\mathbb{E}[X(k)] := \mu  $ and \textit{unknown} variance 
	$
	{\rm var}(X(k)) = \mathbb{E}[(X(k) - \mu)^2]:= \sigma^2 \geq 0
	$
	for all $k$.

	\begin{remark} \rm
		In practice, since the true mean $\mu$ and variance~$\sigma$ are unknown, one often works with the estimated surrogates such as sample mean and sample standard deviation; see \cite[Chapter 9]{luenberger2013investment} and  Section~\ref{subsection: trading with historical data} for an illustration. 
		While this approach may incur some estimation errors, as seen in the sections to follow, we shall quantify a range for $\mu$ such that the desired positivity of expected return can be preserved.
	\end{remark}

	\subsection{Double Linear Policy and Transaction Costs}
	With initial account value $V(0):=V_0>0$, we split it into two parts as follows: 
	Taking an \textit{allocation constant}~$\alpha \in [0,1]$, we define~$V_L(0) := \alpha V_0$ as the initial account value  for \textit{long} position   and~$V_S(0) := (1-\alpha)V_0$ for \textit{short} position.\footnote{In stock trading, a long position means that the trader purchases shares of stock in the hope of making a profit from a subsequent rise in the price of the underlying stock. On the other hand, a short position means that the trader borrows shares from a broker in the hope of making a profit from a subsequent fall in the price of the stock; see~\cite{luenberger2013investment, bodie2009investments}.}
	Then~$V(0) = V_L(0) +V_S(0) .$ 
	The~\emph{trading policy} or  \textit{controller}~$\pi(\cdot)$ is given~by
	$
	\pi(k):=\pi_L(k) +\pi_S(k)
	$
	where~$\pi_L$ and~$\pi_S$ are of \textit{double linear} forms:
	\begin{align} \label{eq: double linear feedback}
		\begin{cases}
			\pi_L( k ) = w V_L\left( k \right);\\
			\pi_S( k ) =  - w V_S \left( k \right)
		\end{cases}
	\end{align}
	where the weight $w$ satisfies
	$	w \in \mathcal{W}$ for some admissible set $\cal W$.
	Specifically, 
	when there are transaction costs per trade with percentage rate $\varepsilon \in [0,1]$, we take $\varepsilon \pi_L(k) := \varepsilon w V_L(k)$ for long trade or~$\varepsilon |\pi_S(k)| := \varepsilon w V_S(k)$ for short trade. In the sequel, we take $\mathcal{W}:=[0, w_{\max}]$ with 
	$$
	w_{\max}:=\min \left\{ \frac{1}{1+\varepsilon},\, \frac{1}{X_{\max}+\varepsilon} \right\} .
	$$
	 In the sequel, we refer to the control scheme above as the \textit{double linear policy} with pair~$(\alpha, w)$. 

	\begin{remark} \rm \label{remark: admissible set}
		$(i)$ The double linear form can be viewed as a natural extension of the  pure long linear strategy by taking~$\alpha =1$; e.g., see~\cite{hsieh2019positive, hsieh2020necessary, hsieh2022generalization}.  
		$(ii)$ The choice of the upper bound for~$\cal W$ above corresponds to assuring \textit{survival} trades, see Lemma~\ref{lemma: survivability} in the next Section~\ref{subsection: Survivability} to follow, and the trades are \textit{cash-financed}; i.e.,~$|\pi(k)| \leq V(k)$ for all $k$ with probability one; see Lemma~\ref{lemma: Cash-Financing}. 
		As seen later in Section~\ref{SECTION: Analysis of Trading Performance} to follow, the double linear policy also enjoys some additional properties such as convexity and positive expectation.
	\end{remark}
	
	\subsection{Account Value  Dynamics}
	The associated account values under~$\pi_L$ and~$\pi_S$, denoted by~$V_L(k)$ and $V_S(k)$ respectively, are described by the following stochastic recursive equations:
	\begin{align} \label{eq: V_L and V_S}
	\begin{cases}
		{V_L}\left( {k + 1} \right) = V_L( k ) + X\left( k \right){\pi_L}\left( k \right) - \varepsilon \pi_L(k);\\
		{V_S}\left( {k + 1} \right) = V_S( k ) + X\left( k \right){\pi_S}\left( k \right) - \varepsilon |\pi_S(k)| .
	\end{cases} 
	\end{align}
	This implies that 
	$
	{V_L}( k ) = \prod_{j = 0}^{k - 1} {\left( {1 + w \widetilde{X}_L ( j )} \right)} {V_L}\left( 0 \right)$
	and
	$
	{V_S}( k ) = \prod_{j = 0}^{k - 1} {\left( {1 - w \widetilde{X}_S ( j )} \right)} {V_S}\left( 0 \right)
	$
	where $\widetilde{X}_L(k):=X(k) -\varepsilon$ and $\widetilde{X}_S(k) := X(k) + \varepsilon.$
	Therefore, the \textit{overall account value} at stage $k$ is given~by
	{\small \begin{align*} 
			V(k) 
			&= V_L(k) + V_S(k)  \\
			&= V_0\left(  \alpha {\prod_{j = 0}^{k - 1} \left( 1 + w \widetilde{X}_L( j ) \right)  + (1-\alpha)\prod_{j = 0}^{k - 1} \left( 1 - w \widetilde{X}_S( j ) \right) } \right).
		\end{align*} 
	}Note that the account value $V(k)$ depends on the allocation constant~$\alpha$, the decision weight $w$, transaction costs rate $\varepsilon $ and return sequence $X=\{X(i)\}_{i=0}^{k-1}$.
	
	\begin{remark} \rm
		 $(i)$ According to classical SLS theory; e.g., see~\cite{barmish2015new, barmish2011arbitrage}, the associated control law can be written as
		\begin{align*}
			\begin{cases}
				\pi_L(k) = \pi_0 +w(V_L(k)-V_L(0)); \\
				\pi_S(k) = -\pi_0 - w(V_S(k) - V_S(0))
			\end{cases}
		\end{align*}
		for some initial investment $\pi_0>0$. It is readily verified that the classical~SLS controller is equivalent to the double linear policy with $\alpha = 1/2$ if one takes $\pi_0 := wV_0/2$ and $V_L(0) = V_S(0) := V_0/2$.
		$(ii)$ Our framework can easily adapt to involve the risk-free asset, such as a \textit{bond} or \textit{bank account}, with per-period interest rate $r(k) \geq 0$ for all $k$ with probability one; see \cite{cvitanic2004introduction}. Then the account value dynamics for the long trade can be modeled as
		\begin{align*}
			V_L( {k + 1} ) 
			&= (1 + r(k) )V_L( k ) + (X( k ) - \varepsilon - r(k) )\pi_L(k)
		\end{align*}
		and the account value dynamics $V_S(k)$  for short trade stays the same.  By taking the risk-free asset as a \textit{cash} which yields~$r(k) :=0$ for all $k$, we replicate Equation~(\ref{eq: V_L and V_S}).
	\end{remark}

	\section{Analysis of Trading Performance} \label{SECTION: Analysis of Trading Performance}
	In this section, we study the trading performance of the proposed double linear policy. Specifically, we provide several results such as survivability, convexity, positive expectation, and cash-financing lemma. 
	
	\subsection{Survivability Considerations} \label{subsection: Survivability}
	As mentioned in the previous subsection, every $w \in \mathcal{W}$  assures \textit{survivability}, i.e., bankruptcy avoidance. That is, for all $k$, the~$w$-value that can potentially lead to $V(k)<0$ is disallowed. 
	
	\begin{lemmarep}[Survivability] \label{lemma: survivability}
		Let $\varepsilon \in [0, 1]$.
		The double linear policy with a pair $(\alpha, w) \in (0,1] \times \mathcal{W}$ assures
		$
		V(k) > 0
		$  for~$k =0,1,\dots,$ with probability one.
	\end{lemmarep}
	
	\textit{Proof.} See Appendix.
	
	\begin{proof} 
		Note that for $k=0$, $V(0) = V_0>0$. 
		Fix an integer~$k > 0$. Observe that
		\begin{equation}
			\begin{aligned}
				&V(k) = V_L(k) + V_S(k) \\
				&= \left( \alpha \prod_{j = 0}^{k - 1} \left( {1 + w \widetilde{X}_L\left( j \right)} \right)  + (1-\alpha)\prod_{j = 0}^{k - 1} {\left( 1 - w \widetilde{X}_S( j ) \right)}  \right)V_0 \\
				&\geq \left( \alpha ( 1 + w (X_{\min}-\varepsilon)  )^k  + (1-\alpha)  ( 1 - w (X_{\max}+\varepsilon)  )^k  \right)V_0\\
				&:=V_{\min}^*(k).
			\end{aligned}  
		\end{equation}
		To complete the proof, it suffices to show that $V_{\min}^*(k) > 0.$
		Since  $w \in \mathcal{W}$ 
		and~$X_{\min}>-1$, it implies that
$$
			1 + w (X_{\min} -\varepsilon) 
			> 1 - w_{\max} (1 + \varepsilon) \geq 0
$$
		where the last inequality holds since~$w_{\max} \leq 1/(1+\varepsilon)$.
		Hence $(1+ w (X_{\min} - \varepsilon) )^k > 0$ for all $k$. 
		On the other hand, since $0\leq w \leq 1/(X_{\max} + \varepsilon)$, it follows that  
$
			1- w (X_{\max} + \varepsilon)\geq 0.
$
Hence, $(1- w (X_{\max} + \varepsilon))^k \geq 0$.
In combination with the results above and the fact that $\alpha \in (0,1]$, we conclude that $V_{\min}^*(k) > 0$ for all~$k$ with probability~one. 
	\end{proof}
	
%
%

\subsection{Expected Trading Gain-Loss and Variance}

	We take
$
\mathcal{G}_k(\alpha, w, X, \varepsilon ) := V(k)-V_0
$
to be the \textit{cumulative trading gain-loss function} up to stage~$k$ where~$X:=\{X(i)\}_{i=0}^{k-1}$ is the sequence of returns~$X(0), \dots,X(k-1)$.
Then the expected cumulative gain-loss is given by
$$
\overline{\mathcal{G}}_k(\alpha, w, \mu, \varepsilon):=\mathbb{E}[\mathcal{G}_k(\alpha, w,  X, \varepsilon)].
$$
	The following lemma states the closed-form expression of the expected value and variance of the cumulative gain-loss function when there are transaction costs.

	\begin{lemmarep}[Expected Trading Gain-Loss and Variance] \label{lemma: expected cumulative gain or loss and variance}
		Fix~$\varepsilon \in [0,1]$. For stage $k=0,1,\dots$, 
		the double linear policy with pair~$(\alpha, w) \in [0,1] \times \mathcal{W}$ yields expected cumulative gain-loss function
		\begin{align*}
			&\overline{\mathcal{G}}_k(\alpha, w, \mu, \varepsilon) \\
			&=  V_0 \left( \alpha \left( 1 + w (\mu- \varepsilon) \right)^k+ (1-\alpha)\left( 1 - w (\mu + \varepsilon)  \right)^k - 1 \right). 
		\end{align*}
		Moreover, the corresponding variance is given by
		\begin{align*}
			&{\rm var}(\mathcal{G}_k(\alpha, w, X, \varepsilon))\\
			& = 	  \alpha^2   \left( (1 + w (\mu - \varepsilon) )^2 +  w^2 \sigma^2   \right)^k \\
			&\;\;\;\; + (1-\alpha)^2  \left( (1 - w (\mu + \varepsilon) )^2 + w^2 \sigma^2  \right)^k \\
			& \;\;\;\; + 2\alpha (1-\alpha)  ( 1 + 2 w \varepsilon - w^2 ( (\mu^2 + \sigma^2)- \varepsilon^2)  )^k  \\
			& \;\;\;\; - 2\alpha \left( 1 + w (\mu- \varepsilon) \right)^k  - 2(1-\alpha)\left( 1 - w (\mu+ \varepsilon) \right)^k  +1  \\
			& \;\;\;\;  -  \left( \alpha \left( 1 + w (\mu- \varepsilon) \right)^k+ (1-\alpha)\left( 1 - w (\mu + \varepsilon)  \right)^k - 1 \right)^2.
		\end{align*}
		Of course, the standard deviation of cumulative trading gain or loss is ${\rm std}(\mathcal{G}_k(\alpha, w, X, \varepsilon)) =\sqrt{{\rm var}(\mathcal{G}_k(\alpha, w, X, \varepsilon))}$.
	\end{lemmarep}
	
		\textit{Proof.} See Appendix.
		
	\begin{proof} 
		Let $k$ be fixed. Set $R_L(k) := \prod_{j = 0}^{k - 1} \left( 1 + w \widetilde{X}_L(j) \right)$ and $R_S(k) := \prod_{j = 0}^{k - 1} \left( 1 - w \widetilde{X}_S(j)  \right).$
		Using the facts that the returns $X(k)$ are independent in $k$ and~$\mathbb{E}[X(k)] = \mu$ for all~$k$, we have
	{\small	
		\begin{align} \label{eq: expected trading gain }
				& \overline{\mathcal{G}}_k(\alpha, w, \mu, \varepsilon) \nonumber  \\
				& = V_0 ( \alpha \mathbb{E}[ R_L(k)]  + (1-\alpha) \mathbb{E}[R_S(k)]  - 1  ) \nonumber  \\
				&= V_0 \left( \alpha \left( 1 + w (\mu- \varepsilon) \right)^k+ (1-\alpha)\left( 1 - w (\mu + \varepsilon)  \right)^k - 1 \right).
		\end{align}
	}
		Subsequently,	to obtain the variance, we use the fact
		$
		{\rm var}(\mathcal{G}_k(\alpha, w, X, \varepsilon) )=\mathbb{E}[\mathcal{G}_k^2(\alpha, w, X, \varepsilon)]-\overline{\mathcal{G}}_k^2 (\alpha, w, \mu, \varepsilon).
		$
		Since the second term can be obtained from Equation~(\ref{eq: expected trading gain }), it remains to calculate the second moment $\mathbb{E}[\mathcal{G}_k^2(\alpha, w, X, \varepsilon)]$. 
		Without loss of generality, we set $V_0:=1$.
		Using the facts that~$X(k)$ are independent, $\mathbb{E}[X(k)] = \mu$, $\mathbb{E}[X^2(k)] = \sigma^2 + \mu^2$ for all $k$,  and the linearity of expected value, a straightforward calculation leads to
		\begin{align*}
			\mathbb{E}[\mathcal{G}_k^2(\alpha, w, X, \varepsilon)] 
			&=  \mathbb{E}[ \left( \alpha R_L(k)  + (1-\alpha)R_S(k)  - 1 \right)^2 ]\\
			&=   \alpha^2 \mathbb{E}[  R_L^2(k)]  + (1-\alpha)^2 \mathbb{E}[  R_S^2(k)] \\
			&\;\;\;\; + 2(1-\alpha)\alpha \mathbb{E}[  R_L(k)R_S(k)]  \\
			&\;\;\;\; - 2\alpha \mathbb{E}[  R_L(k)  ] - 2(1-\alpha)\mathbb{E}[  R_S(k)]   +1  
		\end{align*}
		where the three terms $\mathbb{E}[R_L^2(k)],$ $\mathbb{E}[R_S^2(k)]$, and $\mathbb{E}[R_L(k) R_S(k)]$ are obtained via lengthy but straightforward calculations. That is, we have
		\begin{align*}
			\mathbb{E}[R_L^2(k)] 
			&=    \left( (1 + w (\mu - \varepsilon) )^2 +  w^2 \sigma^2   \right)^k,
		\end{align*}
and
$
			\mathbb{E}[R_S^2(k)] 
			=   \left( (1 - w (\mu + \varepsilon) )^2 + w^2 \sigma^2  \right)^k,
$
		and the cross term
$
			\mathbb{E}[R_L(k)R_S(k)] 
			=     ( 1 + 2 w \varepsilon - w^2 ( (\mu^2 + \sigma^2)- \varepsilon^2)  )^k.
$
		Therefore, in combination with the results above, the desired expression for the variance is obtained.
%
		To complete the proof, we note that the standard deviation ${\rm std}(\mathcal{G}_k(\alpha, w, X, \varepsilon))=\sqrt{{\rm var}(\mathcal{G}_k(\alpha, w, X, \varepsilon))}$, which is desired. 
	\end{proof}

	\begin{remark} \rm
		$(i)$ As a sanity test, if one decides not to trade; i.e., $w=0$,  then
		$
		\overline{\mathcal{G}}_k(\alpha, w, \mu, \varepsilon) = {\rm var}(\mathcal{G}_k(\alpha, w, X, \varepsilon))=0.
		$
		$(ii)$~If there are no transaction costs and no uncertainty on the return; i.e., $\sigma = \varepsilon  :=0$, then  a straightforward calculation leads to
		$
		{\rm var}(\mathcal{G}_k(\alpha, w, X, \varepsilon)) =0.
		$	
		$(iii)$~If the returns has no trend; i.e., $\mu=0$, then it follows that $\overline{\mathcal{G}}_k(\alpha, w,\mu,\varepsilon) =V_0 \left(  \left( 1 - w \varepsilon \right)^k - 1 \right) \leq 0$. 
		Thus, with nonzero transaction costs~$\varepsilon>0$, one should invest with zero weight $w=0$ to avoid loss in expected gain-loss.
		%
		$(iii)$ Lemma~\ref{lemma: expected cumulative gain or loss and variance} above can be viewed as a generalization of the result for the classical discrete-time SLS controller considered in \cite{deshpande2018generalization}.
	\end{remark}

	\subsection{Convexity and Positive Expectation} \label{SECTION: Convexity}
To study the expected positivity, the following  convexity result of the expected gain-loss function is useful.

	\begin{lemmarep} [Convexity] \label{lemma: convexity} 
		Fix $\varepsilon \in [0,1]$. For $k > 1$,	if~$\alpha \in (0,1)$, then  the expected cumulative gain-loss function~$\overline{\mathcal{G}}_k(\alpha, w, \mu, \varepsilon)$ is strictly convex in $w \in \mathcal{W}$.
	\end{lemmarep}
	
		\textit{Proof.} See Appendix.
	
	\begin{proof} 
		According to Lemma~\ref{lemma: expected cumulative gain or loss and variance}, we have $\overline{\mathcal{G}}_k(\alpha, w, \mu, \varepsilon) = V_0 f_k(\alpha, w,\mu,\varepsilon)$  where
		\begin{align*}
			f_k(\alpha, w,\mu,\varepsilon) &:=  \alpha \left( 1 + w (\mu- \varepsilon) \right)^k \\
			&+ (1-\alpha)\left( 1 - w (\mu + \varepsilon)  \right)^k - 1 . 
		\end{align*}
		Note that any strict convex function multiplied by a strictly positive scalar is still strict convex.
		Fix $k \geq 2$.
		Since~$V_0>0$,  it suffices to show that $f_k(\alpha, w,\mu,\varepsilon)$ is strictly convex in $w \in \mathcal{W}$.
		Taking the derivative with respect to $w$ twice yields
		the second derivative 
		{\small \begin{align*}
			\frac{\partial^2 f_k(\alpha, w, \mu, \varepsilon) }{\partial w^2} 
			&= \alpha k(k-1)^2(1 + w (\mu - \varepsilon) )^{k-2} (\mu - \varepsilon)^2 \\
			&+ (1-\alpha)k(k-1)^2(1 - w (\mu + \varepsilon) )^{k-2}(\mu + \varepsilon)^2.
		\end{align*}
	 }To establish the desired convexity, it suffices to verify positivity of $(1 + w (\mu - \varepsilon) )^{k-2}$ and $(1 - w (\mu + \varepsilon) )^{k-2}$.
		Since $w \in \mathcal{W}$ and $\mu > -1$, we observe that
		$
		1+w (\mu - \varepsilon) > 1-w (1+\varepsilon)\geq 0.
		$  
		Therefore, $(1+w (\mu - \varepsilon))^{k-2} >0$.
		On the other hand, note that 
		$
		1 - w (\mu + \varepsilon)  \geq 1 - w(X_{\max} +\varepsilon)\geq 0,
		$ which implies that $(1 - w (\mu + \varepsilon) )^{k-2} \geq 0$.
		Therefore, in combination with the fact that $\alpha \in (0,1)$, it implies that $\frac{\partial^2}{\partial w^2} f_k(\alpha, w, \mu, \varepsilon) > 0$ for all~$w \in \mathcal{W}$. Hence, the strict convexity for  $\overline{\mathcal{G}}_k(\alpha, w, \mu, \varepsilon) $ is established.
	\end{proof}

	\begin{remark} \rm
	$(i)$	A similar proof used in Lemma~\ref{lemma: convexity} can be made to show that $\overline{\mathcal{G}}_k$ is strictly convex in $\mu.$
$(ii)$ For $\varepsilon >0$ and~$\alpha = 1/2$, the expected gain-loss function has the minimum at $\mu_0 = 0$. For $\alpha \neq 1/2,$ then the minimum, call it $\mu_0$, requires to solve a nonlinear equation
$
e^{\frac{1}{k} \log \frac{\alpha}{1-\alpha}} = \frac{1 + w(\mu-\varepsilon)}{ 1 - w(\mu +\varepsilon)}.
$
In either case, the associated gain-loss function~$\overline{\mathcal{G}}_k(\alpha, w, \mu_0, \varepsilon) < 0$; see also example in Section~\ref{subsection: Mote-Carlo Simulations}.
Using the fact that $\overline{\mathcal{G}}_k(\cdot)$ is continuous and strictly convex in $\mu$,  Intermediate Theorem; e.g., see \cite{rudin1964principles}, indicates that there exist two critical points~$\mu_{\pm}$ such that~$\overline{\mathcal{G}}_k(\alpha, w, \mu_{\pm}, \varepsilon) = 0$. 
Therefore, strict convexity assures that for $\mu > \mu_+$ or $\mu < \mu_-$, $\overline{\mathcal{G}}_k(\alpha, w, \mu_{\pm}, \varepsilon) > 0$.
However, obtaining the two critical points $\mu_{\pm}$ analytically require solving high-order nonlinear functions, which is intractable in general. In the following theorem, we propose an asymptote approach for $\overline{\mathcal{G}}_k$ to determine a moderately conservative positive expectation property. 
	\end{remark}

%
%
%

	\begin{theorem}[Positive Expectation] \label{theorem: Robust Positive Expectation} 
		Consider the double linear policy with pair $(\alpha, w)$.  For all~$k>1$, the following statements hold true: 
		
		$(i)$ If $\varepsilon  = 0$, then any pair $(\alpha, w)  \in \{1/2\} \times \mathcal{W} \setminus \{0\}$ guarantees the positive expected cumulative trading gain-loss; i.e.,
		$
		\overline{\mathcal{G}}_k(1/2, w, \mu, \varepsilon) > 0
		$
		for all $\mu \neq 0$. Moreover,  if $\mu =0$, we have $	\overline{\mathcal{G}}_k(1/2, w, \mu, \varepsilon) = 0.$
		
		$(ii)$ If $\varepsilon>0$, then any pair $(\alpha, w) \in (0,1) \times \mathcal{W} \setminus \{0\}$ guarantees a  positive expected cumulative trading gain-loss; i.e.,
		$
		\overline{\mathcal{G}}_k(\alpha, w, \mu, \varepsilon) > 0
		$
		for all $\mu > \mu_+$ or $\mu < \mu_-$ where 
		$$
		\mu_+ = \frac{1}{w} \left( e^{\frac{1}{k}\log \frac{1}{\alpha}} - 1 \right) + \varepsilon 
		$$
		and 
		$$
		\mu_- = \frac{1}{w} \left( 1-e^{\frac{1}{k} \log \frac{1}{1-\alpha}} \right) -\varepsilon.
		$$
	\end{theorem}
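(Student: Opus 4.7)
The plan is to exploit the explicit closed form of $\overline{\mathcal{G}}_k$ from Lemma~\ref{lemma: expected cumulative gain or loss and variance} and reduce both parts to showing that a single non-negative summand already exceeds $V_0$. Throughout, I will use the crucial fact established in the proof of Lemma~\ref{lemma: survivability} that for $w \in \mathcal{W}$, both bases $1 + w(\mu - \varepsilon)$ and $1 - w(\mu + \varepsilon)$ are nonnegative (since $X_{\min} \leq \mu \leq X_{\max}$ and $w \leq \min\{1/(1+\varepsilon),\,1/(X_{\max}+\varepsilon)\}$). Consequently $(1 + w(\mu-\varepsilon))^k \geq 0$ and $(1 - w(\mu+\varepsilon))^k \geq 0$, which is the key structural observation driving the whole argument.

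For part $(i)$, substituting $\alpha = 1/2$ and $\varepsilon = 0$ into the formula of Lemma~\ref{lemma: expected cumulative gain or loss and variance} yields the symmetric expression
\[
\overline{\mathcal{G}}_k(1/2, w, \mu, 0) \;=\; \tfrac{V_0}{2}\!\left((1+w\mu)^k + (1-w\mu)^k - 2\right).
\]
I would then binomially expand and observe that all odd-indexed terms cancel, leaving $V_0 \sum_{j\ge 1}\binom{k}{2j}(w\mu)^{2j}$, which is strictly positive whenever $w\mu \neq 0$ and $k \geq 2$, and is identically $0$ when $\mu = 0$.

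For part $(ii)$, the asymptote idea is: because each of the two bracketed $k$-th power terms is nonnegative (by the observation above), it suffices to force \emph{one} of them alone to be at least $1$ in order to make $\overline{\mathcal{G}}_k > 0$. Setting $\alpha(1 + w(\mu - \varepsilon))^k = 1$ and solving for $\mu$ gives exactly $\mu_+ = \frac{1}{w}(e^{(1/k)\log(1/\alpha)} - 1) + \varepsilon$; similarly, setting $(1-\alpha)(1 - w(\mu + \varepsilon))^k = 1$ produces $\mu_-$. I would then invoke monotonicity: the map $\mu \mapsto \alpha(1 + w(\mu - \varepsilon))^k$ is strictly increasing in $\mu$, so $\mu > \mu_+$ forces the long-side term to strictly exceed $1$, hence $\overline{\mathcal{G}}_k > 0$. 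Symmetrically, $\mu \mapsto (1-\alpha)(1 - w(\mu + \varepsilon))^k$ is strictly decreasing, so $\mu < \mu_-$ forces the short-side term to strictly exceed $1$.

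No serious obstacle is expected: once the nonnegativity of the two bases is in hand, the proof is essentially algebraic. The subtle point worth being explicit about is \emph{why} this asymptote bound is only conservative rather than tight—the reason is that we discard the contribution of the other (still nonnegative) summand instead of balancing the two, which is precisely what the remark preceding the theorem alludes to in noting that the exact roots $\mu_\pm$ of the full equation require solving a higher-order nonlinear equation. Thus the explicit $\mu_\pm$ stated here are sufficient, not necessary, conditions for positive expectation.
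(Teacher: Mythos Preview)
Your proposal is correct and follows essentially the same route as the paper. For part~$(i)$ the paper simply invokes the inequality $(1+x)^k+(1-x)^k>2$ for $x\neq 0$, $k>1$, which your binomial expansion is one way of proving; for part~$(ii)$ the paper does exactly what you describe---drop one nonnegative summand, solve the resulting one-term equation for $\mu_\pm$, and use monotonicity of the retained asymptote---so the two arguments coincide.
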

	
	\begin{proof} 
		To prove part~$(i)$, take $\varepsilon =0$ and fix $k>1$. Take the pair $(\alpha, w) \in \{1/2\} \times \mathcal{W} \setminus \{0\}$.  Lemma~\ref{lemma: expected cumulative gain or loss and variance} tells us that
		\[
		\overline{\mathcal{G}}_k(1/2, w, \mu, 0) = \frac{V_0}{2}\left( {{{\left( {1 + w\mu } \right)}^k} + {{\left( {1 - w\mu } \right)}^k} - 2} \right).
		\]
		If $\mu =0$, it is trivial to see that $\overline{\mathcal{G}}_k(1/2, w, 0, 0)=0.$
		On the other hand, if~$\mu \neq 0$, then $w\mu \neq 0$ for all $w \in \mathcal{W}\setminus \{0\}$. 
		By virtue of the fact that $(1+x)^k + (1-x)^k > 2$ for all~$x \neq 0$ and~$k > 1$, 
		the desired positivity is guaranteed by~$x := w \mu$. 

		To prove part~$(ii)$, we take $\varepsilon >0 $ and
		fix $k>1$. 
		Take a pair $(\alpha, w) \in  (0,1) \times \mathcal{W} \setminus \{0\}$.  Lemma~\ref{lemma: expected cumulative gain or loss and variance} tells us that
		$
		\overline{\mathcal{G}}_k(\alpha, w, \mu, \varepsilon) = V_0 f_k(\alpha, w,\mu, \varepsilon)
		$
		where 
		\begin{align*}
			f_k(\alpha, w,\mu, \varepsilon) 
			&:=\alpha \left( 1 + w (\mu- \varepsilon) \right)^k\\
			&+ (1-\alpha)\left( 1 - w (\mu + \varepsilon)  \right)^k - 1. 
		\end{align*}
		Since $V_0>0$, without loss of generality, we set~$V_0:=1$.
		Note that 
		$
		 f_k(\alpha, w,\mu, \varepsilon)  \geq \alpha \left( 1 + w (\mu- \varepsilon) \right)^k - 1
		 $
		  and  $f_k(\alpha, w, \mu, \varepsilon)$ is continuous in $\mu$. 
		The asymptote~$\alpha \left( 1 + w (\mu- \varepsilon) \right)^k - 1$ is continuous and monotonically increasing in $\mu$. 
		By solving the zero crossing root for the asymptote; i.e., solving~$\alpha \left( 1 + w (\mu- \varepsilon) \right)^k - 1:=0	$
		for~$\mu$ yields
		\[
		\mu  = \frac{1}{w} \left( e^{\frac{1}{k}\log \frac{1}{\alpha}} - 1 \right) + \varepsilon :=\mu_+
		\]  
		and $\mu_+ >0.$
		Hence, for $\mu > \mu_+$, we have $f_k(\alpha, w, \mu, \varepsilon) >0.$
		Similarly, it is also readily verified that
		$$
		f_k(\alpha, w,\mu, \varepsilon)  \geq (1-\alpha)\left( 1 - w (\mu + \varepsilon)  \right)^k - 1
		$$
		and the asymptote  $(1-\alpha)\left( 1 - w (\mu + \varepsilon)  \right)^k - 1$ is again continuous and monotonically decreasing in $\mu$. 
		Hence, solving~$(1-\alpha)\left( 1 - w (\mu + \varepsilon)  \right)^k - 1:=0$ yields
		\[
		\mu    =\frac{1}{w} \left( 1-e^{\frac{1}{k} \log \frac{1}{1-\alpha}} \right) -\varepsilon := \mu_- 
		\]  and $\mu_- <0$.
		Therefore, for $\mu < \mu_-$, $f_k(\alpha, w, \mu, \varepsilon) >0$.
		It is also readily verified that $\mu_- < 0 <  \mu_+$.
		Hence, in combination with the results above, we conclude that the desired positivity~$
		f_k(\alpha, w, \mu, \varepsilon) >0$ holds 
		for $ \mu >  \mu_+$ or~$ \mu <  \mu_- .$
	\end{proof}

	\begin{remark} \rm 
		The two critical points $\mu_{\pm}$ obtained in Theorem~\ref{theorem: Robust Positive Expectation} relies on zero crossing roots of the asymptote of $\overline{\mathcal{G}}_k$. Hence, it gives a conservative criterion for assuring the positive expectation. 
		In addition, if $k \to \infty$, then~$\mu_+, \mu_- \to \varepsilon, -\varepsilon$, respectively. When $k \to \infty$ and~$\varepsilon = 0$, part~$(ii)$ in Theorem~\ref{theorem: Robust Positive Expectation} reduces to part~$(i)$ in the same theorem.
	\end{remark}

	\subsection{Cash-Financing Lemma}
	As mentioned in Remark~\ref{remark: admissible set}, the double linear policy with pair $(\alpha, w) \in [0,1] \times \mathcal{W}$ assures a cash-financed trade.

	\begin{lemmarep}[Cash-Financing] \label{lemma: Cash-Financing}
		The double linear policy with a pair~$(\alpha, w) \in [0,1] \times \mathcal{W}$ satisfies the cash-financing condition. That is,
		$
		| \pi(k) |  \leq V(k)
		$ for all $k\geq 0$ with probability one.
	\end{lemmarep}
	
		\textit{Proof.} See Appendix.
		
	\begin{proof} 
		With $V_0>0$, fix $(\alpha, w) \in [0,1] \times \mathcal{W}$. For $k=0$,
		we begin by noting that $\pi_L(0) = w V_L(0) = \alpha wV_0$ and $\pi_S(0) = -wV_S(0) = -(1-\alpha) wV_0$. 
		Thus, observe that
		\begin{align*}
		|\pi(0)| &= |\pi_L(0) + \pi_S(0)| \\
		&\leq |\pi_L(0)| + |\pi_S(0)| = (2\alpha  - 1)wV_0 \leq V_0
		\end{align*}
	where the first inequality follows from the triangle inequality.
		Fix~$k \geq 1$. We have $\pi(k)  = \pi_L(k) +\pi_S(k)$ where $\pi_L(k) = wV_L(k)$ and $\pi_S(k) = -wV_S(k)$.
		Since~$w \in \mathcal{W}$, applying Lemma~\ref{lemma: survivability}, it is readily verified that~$V_L(k), V_S(k) \geq 0$ for all~$k\geq 1$ with probability one. 
		\begin{align*}
			|\pi(k)| 
			&= |\pi_L(k) + \pi_S(k)|\\
			& \leq w |V_L(k) + V_S(k)| \\
			&= w V(k) \leq V(k)
		\end{align*}
		where the last inequality holds since $w \in \mathcal{W} = [0, w_{\max}]$ and $w_{\max} \leq \frac{1}{1+\varepsilon} \leq 1$.  
	\end{proof}


	\section{Illustrative Examples} \label{SECTION: Illustrative Examples}
	This section provides two examples to illustrate our double linear policy scheme. The first example examines the positive expectation by carrying large numbers of Monte-Carlo simulations for prices modeled by geometric Brownian motion with jumps. The second example is for illustrating the trading performance using cryptocurrency historical data.
	
\subsection{Monte-Carlo Simulations:  GBM with Jumps} \label{subsection: Mote-Carlo Simulations}
For $t \in [0, T]$,	consider a stock whose price $S(\cdot)$ follows a geometric Brownian motion (GBM) with jump model; see~\cite{etheridge2002course}; i.e.,
\[
S(t) = S_0 \exp \left(\left(\mu^* - \frac{1}{2}\sigma^{*2} \right)t + \sigma^* W(t)\right)(1-\delta)^{N(t)}
\]
where $\{W(t)\}_{t\ge 0}$ is a standard Wiener process and $\{N(t)\}_{t \ge 0}$ is  a standard Poisson process with 
$P(N(t) = k) =\frac {(\lambda t)^k}{k!}e^{-\lambda t}$ that is independent of $W(t)$, $\mu^*$ is the {\it drift} constant, $\sigma^* > 0$ is the {\it volatility} constant, $\lambda$ is the average rate of the jump occurs for the process,  and~$\delta \in (0,1)$ denotes that for each jump reduces the stock price by a fraction $\delta$.  

	Taking a basic period length $\Delta t:=1/252$ and $T:=1$, we simulate the stock behavior for one year with an annualized\footnote{For a one-year $252$ trading days,  annualized drift rate and annualized volatility can be approximated by the daily drift rate and daily volatility; i.e.,~$\mu^* = 252 \mu$ and $\sigma^* = \sqrt{252}\sigma$.} drift rate $\mu^* \in [-0.9, 0.9]$ and annualized volatility~$\sigma^* := 2 |\mu^*| Z$ with $Z$ being a uniformly distributed random variable on interval~$[0,1]$, jump intensity $\lambda = 0.1$ with jump size~$\delta = 0.05$. 	
	For each $\mu^* \in [-0.9 : 0.01 : 0.9]$ with incremental values $0.01$ and $\sigma^*$, we generate $10,000$ GBM with jump sample paths, hence, a total of $1,810,000$ paths;
	 see also \cite{luenberger2013investment} for details on this topic.
	
 	With $\varepsilon = 0.01\%$ and $V_0:=\$1$, we consider a double linear policy with pair $(\alpha, w) \in [0,1] \times \{1/(1+\varepsilon)\}$.  
	Then we evaluate the associated expected trading gain-loss functions versus various~$\mu^*$; see Figure~\ref{fig:robustgainlossm10000}, where the red-colored dotted lines are obtained by Lemma~\ref{lemma: expected cumulative gain or loss and variance}, and the gray area indicates the negative expected gain-loss. 
	From the figure, for each choice of $\alpha$, we see that the Monte-Carlo results are consistent with the theory. In addition, it indicates that the desired expected positivity occurs after $\mu >  \mu^*_+$ and $\mu < \mu^*_-$ for some $\mu_+^*$ and~$\mu_-^*$. Similar patterns are also found for different choices of $\varepsilon, \lambda, \delta,$ and  $\sigma$.

\begin{figure}[h!]
	\centering
	\includegraphics[width=0.8 \linewidth]{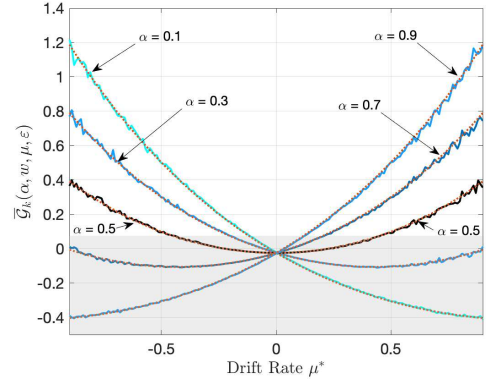}
	\caption{Gain-Loss Function Versus $\mu^* \in [-0.9, 0.9]$}
	\label{fig:robustgainlossm10000}
\end{figure}

%
%
%

\subsection{Backtesting with Cryptocurrency} \label{subsection: trading with historical data}
In this example, we apply the double linear policy to trade Bitcoin USD (BTC-USD) within the period from January~02,~2020 to August  01, 2022 with a total of $952$ days; see Figure~\ref{fig:bitcoinprice} for the corresponding prices trajectory. 
Compared to conventional stocks, Bitcoin prices are much more volatile and sensitive to regulatory and market events.
 In this case, maximum and minimum daily returns are~$X_{\max} \approx 0.1875$ and $X_{\min} \approx -0.3717$, respectively.
With initial account value of~$V_0 = \$100,000$, transaction costs\footnote{Generally, most cryptocurrency exchanges charge between 0\% and 1.5\% per trade, depending on the trading volume and whether it is buying or selling; e.g., see~\cite{kim2017transaction}. } $\varepsilon = 0.01\%$ and weight~$w= 1/4$, the trading performances in terms of gain-loss functions are shown in Figure~\ref{fig:bitcointrading}.
For higher transaction costs, say~$\varepsilon = 0.1\%$ with the same weight $w=1/4$, the trading performances  are shown in Figure~\ref{fig:bitcointradingk14}. 
From the figures, we see the effect of transaction costs, and one should choose  a proper allocation constant $\alpha$ of the fund when $\varepsilon>0.$

\begin{figure}[h!]
	\centering
	\includegraphics[width=.8\linewidth]{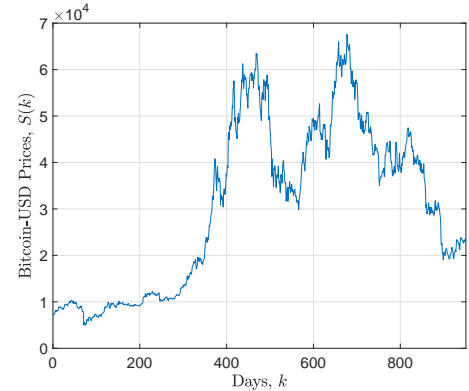}
	\caption{BTC-USD Daily Prices (January 02, 2020 -- August 01, 2022).}
	\label{fig:bitcoinprice}
\end{figure}

\begin{figure}[h!]
	\centering
	\includegraphics[width=.8\linewidth]{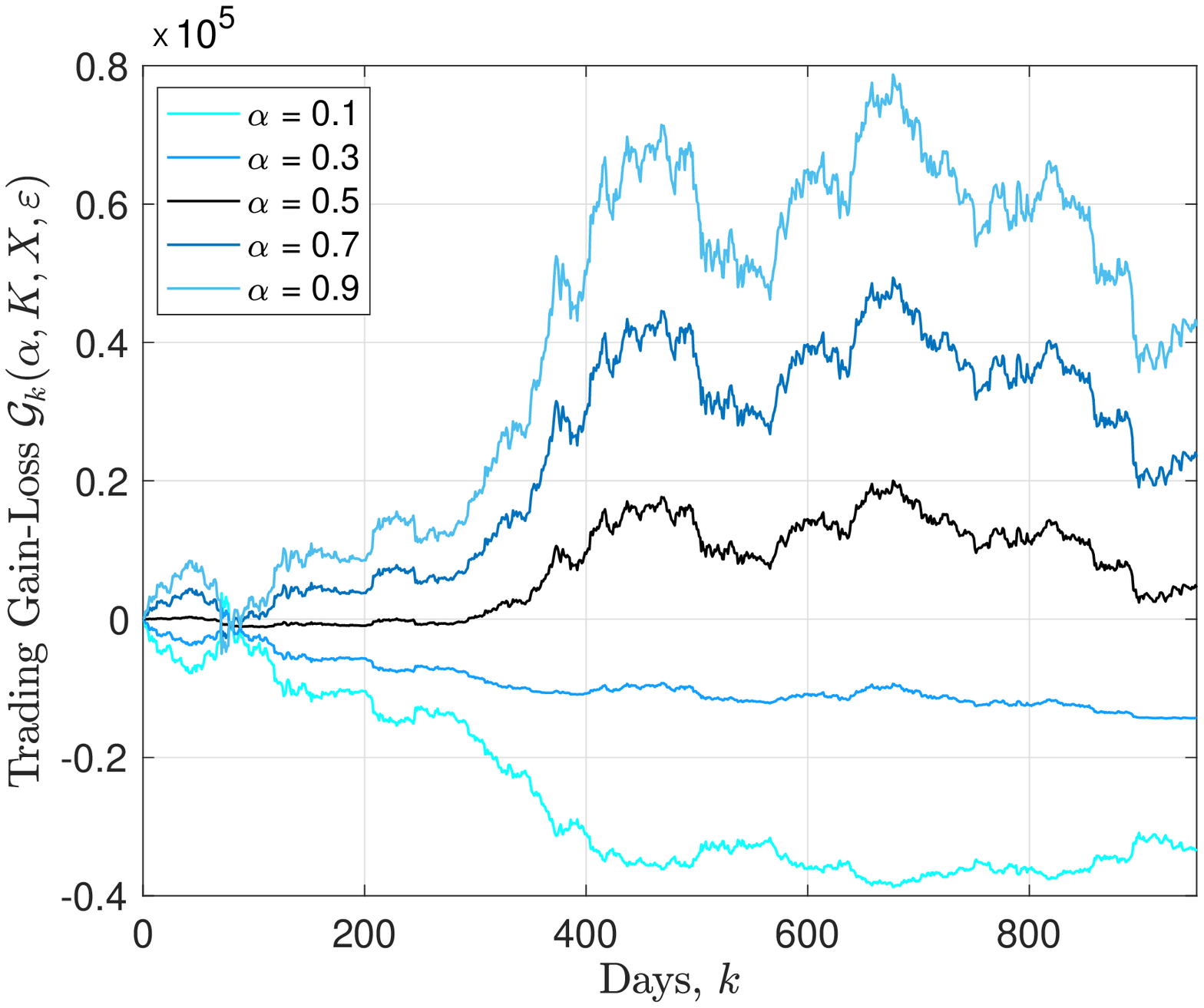}
	\caption{Trading Against BTC-USD with $\varepsilon = 0.01\%$.}
	\label{fig:bitcointrading}
\end{figure}

\begin{figure}[h!]
	\centering
	\includegraphics[width=0.8\linewidth]{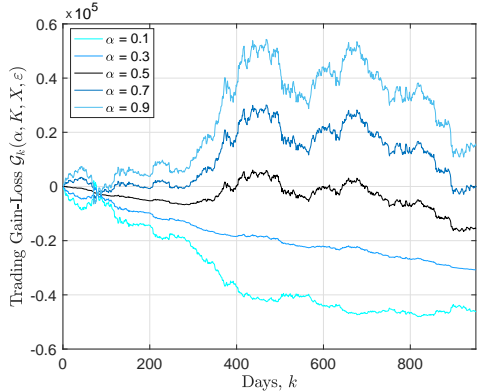}
	\caption{Trading Against BTC-USD with $\varepsilon = 0.1\%$.}
	\label{fig:bitcointradingk14}
\end{figure}

\section{Conclusion} \label{SECTION: Conclusion}
In this paper, we propose a new class of double linear policy schemes in an asset trading scenario with nonzero transaction costs. 
We  indicated that the desired positive expectation result might disappear when the costs are involved.
 In addition, we quantify under what conditions the positive expectations can still be preserved.
We also validate our theory via heavy Monte-Carlo simulations using  GBM with jumps and provide a backtesting example using Bitcoin-USD historical data. 

Regarding future research, it would be of interest to determine an ``optimal" pair $(\alpha, w)$ that also preserves desired positivity; see \cite{maroni2019robust} for some initial studies along this line.	 
The other interesting direction is to consider a multi-asset portfolio case; see \cite{primbs2018pairs, deshpande2020simultaneous} for a preliminary extension to pair tradings.
	
	\bibliographystyle{ieeetr}
	\bibliography{refs}
	

\end{document}